\definecolor{vert}{rgb}{0.02,0.4,0.10}
\pgfplotsset{compat=1.14}
\providecommand{\otherindexspace}[1]{}
\newtheorem{theorem}{Theorem}[section]
\newtheorem{proposition}[theorem]{Proposition}
\numberwithin{equation}{section}
\def\vp{\varepsilon}
\def \R{\mathbb {R}}
\def \Z{\mathbb{Z}}
\def\E{\mathbb{E}}
\def\P{\mathbb{P}}
\def\lb{[\![}
\def\rb{]\!]}
\def\titre{\@title}
\title{Gaskets of $O(2)$ loop-decorated random planar maps
}
\author{Emmanuel Kammerer \thanks{CMAP, \'Ecole polytechnique, Institut Polytechnique de Paris, 91120 Palaiseau, France, \textsf{emmanuel.kammerer@polytechnique.edu}
}}
\date{\today}
\begin{document}

\maketitle

\begin{abstract}
We prove that for $n=2$ the gaskets of critical rigid $O(n)$ loop-decorated random planar maps are $3/2$-stable maps. The case $n=2$ thus corresponds to the critical case in random planar maps. The proof relies on the Wiener--Hopf factorisation for random walks. Our techniques also provide a characterisation of weight sequences of critical $O(2)$ loop-decorated maps.
\end{abstract}

\begin{figure}[h]
	\centering
	\includegraphics[width=0.41\textwidth]{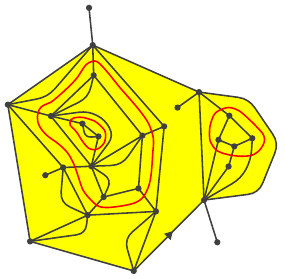}
	\hspace{0.8cm}
	\includegraphics[width=0.41\textwidth]{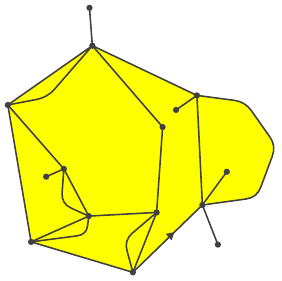}
	\caption{Left: a rigid loop-decorated bipartite planar map. Right: its gasket.}
	\label{carteO2}
\end{figure}


\section{Introduction}
\subsection{$O(n)$ loop-decorated planar maps and their gaskets}
Random $O(n)$ loop-decorated random planar maps are a classical model which couples a statistical mechanics model with quantum gravity. Although they have been studied for more than thirty years by physicists and then by mathematicians, their geometry remains elusive. At criticality, they are conjectured to satisfy scaling limits towards Liouville quantum gravity surfaces with parameter $\gamma$, where $\gamma$ is related to the parameter $n \in (0,2]$ by $\gamma = 2\sqrt{1-\arccos(n/2)/\pi}$ or $\gamma = 2/\sqrt{1+\arccos(n/2)/\pi}$, decorated by an independent conformal loop ensemble of parameter $\kappa = 4/(1\pm \arccos(n/2)/\pi)$. See e.g.\@ Conjecture 2.1 in \cite{HL24} for a precise statement of the conjecture. In the nineties, $O(n)$ loop-decorated random planar maps were first studied in the physics literature using matrix model techniques, see e.g.\@ \cite{EK95,EK96} and the references therein. The precise model we deal with is the rigid $O(n)$ loop model on bipartite planar maps introduced in \cite{BBG12}, and extensively studied in \cite{BBG12bis,BBD16,B18,CCM20,AdSH24} using the gasket decomposition introduced in \cite{BBG12}. Actually, \cite{BBG12, BBG12bis, BBD16} do not restrict themselves at all to the rigid case. See also \cite{Korz22} for some results in the case where loops traverse triangles. However, the case $n=2$ was often excluded, except in \cite{EK96}, in Theorem 5 of \cite{B18}, and in a new version of \cite{AdSH24} which deals with the particular case of $O(2)$ loop-decorated quadrangulations and appeared the same day on the arXiv. At first sight, the case $n=2$ may be seen as a boundary case, but actually, in view of the scaling limit conjectures, it is the critical case since it is related to critical Liouville quantum gravity and to the conformal loop ensemble of parameter $\kappa =4$.

A planar map $\mathfrak{m}$ is a finite connected planar graph which is embedded in the sphere and seen up to orientation-preserving homeomorphism. Our planar maps are equipped with a distinguished oriented edge, called the root edge. The face on the right of the root edge is called the root face and is denoted by $f_r$. For every face $f$, we denote by $\deg (f)$ and we call degree of $f$ the number of edges surrounding $f$ with multiplicity. The degree of $f_r$ is called the perimeter of the map. We restrict ourselves to bipartite maps, i.e.\@ whose face degrees are even. Let $\mathcal{M}$ be the set of finite rooted bipartite planar maps and, for all $\ell \ge 1$, let $\mathcal{M}^{(\ell)}$ be the set of planar maps of perimeter $2\ell$. If $\mathfrak{m}$ is a map, we define the dual map $\mathfrak{m}^\dagger$ as the map obtained by exchanging the roles of the vertices and the faces. Two vertices of $\mathfrak{m}^\dagger$ are adjacent if the corresponding faces of $\mathfrak{m}$ share an edge on their boundary. A loop-decorated map $(\mathfrak{m},{\bf L})$ is a (finite rooted bipartite planar) map $\mathfrak{m}$ equipped with a loop configuration ${\bf L} = (\mathcal{L}_1, \ldots, \mathcal{L}_k)$ of disjoint unoriented simple closed paths on the dual map $\mathfrak{m}^\dagger$. Furthermore, the loops $\mathcal{L}_i$ do not go through the root face of $\mathfrak{m}$ and are rigid in the sense that they only visit quadrangles and they enter and exit quadrangles through opposite sides. See Figure \ref{carteO2}\footnote{All the planar maps drawn in this paper were obtained using the planar map editor of Timothy Budd.}. We denote the set of loop-decorated maps by $\mathcal{LM}$. For all $\ell \ge 1$, we also introduce the set $\mathcal{LM}^{(\ell)}$ of loop-decorated maps of perimeter $2\ell$.

Let ${\bf q} = (q_k)_{k\ge 1}$ be a non-zero sequence of non-negative real numbers. Define the weight of a planar map $\mathfrak{m}\in \mathcal{M}$ by
$$
w_{\bf q} (\mathfrak{m}) = \prod_{f \in \mathrm{Faces}(\mathfrak{m})\setminus\{f_r\} } q_{\deg(f)/2}.
$$
For all $\ell\ge 1$, we define the partition function of bipartite maps of weight sequence $\bf q$ and perimeter $2\ell$
$$
W^{(\ell)} = \sum_{\mathfrak{m} \in \mathcal{M}^{(\ell)}} w_{\bf q} (\mathfrak{m}).
$$
When $W^{(\ell)}<\infty$ for all $\ell\ge 1$, the sequence ${\bf q}$ is said admissible and we denote by $\P^{(\ell)}$ the associated Boltzmann probability measure characterized by $\P^{(\ell)}(\{\mathfrak{m}\}) = w_{\bf q}(\mathfrak{m})/W^{(\ell)}$ for all $\mathfrak{m} \in \mathcal{M}^{(\ell)}$. By Lemma 3.13 of \cite{StFlour}, when ${\bf q}$ is admissible, there exists a constant $c_{\bf q}$ such that
$$
\frac{W^{(\ell+1)}}{W^{(\ell)}} \mathop{\longrightarrow}\limits_{\ell \to \infty} c_{\bf q}.
$$
Similarly, let $\widetilde{\bf q} = (\tilde{q}_k)_{k\ge 1}$ be a sequence of non-negative real numbers and two real numbers $h,n\ge 0$. We define the weight of a loop decorated map $(\mathfrak{m}, {\bf L})\in \mathcal{LM}$ by setting
$$
w_{\widetilde{\bf q}, h,n} (\mathfrak{m}, {\bf L}) = \left(\prod_{\mathcal{L} \in {\bf L}} n h^{\vert \mathcal{L} \vert}\right) \left(\prod_{f \in \mathrm{Faces}(\mathfrak{m})\setminus(\{f_r\}\cup \bigcup_{\mathcal{L} \in {\bf L}} \mathcal{L})} \tilde{q}_{\deg(f)/2}\right),
$$
where $\vert \mathcal{L} \vert $ is the length of the loop $\mathcal{L}$, i.e.\@ its number of quadrangles. For all $\ell\ge 1$, we define the partition function of $O(n)$ loop-decorated maps with triple $(\widetilde{\bf q}, h,n)$ of perimeter $2 \ell$
$$
F^{(\ell)}(\widetilde{\bf q}, h,n) = \sum_{(\mathfrak{m}, {\bf L}) \in \mathcal{LM}^{(\ell)}} w_{\widetilde{\bf q}, h,n}(\mathfrak{m}, {\bf L}).
$$
We say that $(\widetilde{\bf q}, h,n)$ is admissible if $F^{(\ell)}(\widetilde{\bf q}, h, n)<\infty$ for all $\ell \ge 1$.

Let us now recall from \cite{BBG12} the gasket decomposition of a loop-decorated planar map. See also Sections 1.2.1 and 3.1 of \cite{B18} for a presentation of this decomposition. The gasket of a loop-decorated map $(\mathfrak{m}, {\bf L}) \in \mathcal{LM}$ is a planar map $\mathfrak{g} \in \mathcal{M}$ obtained by removing the interior of the outermost loops of $(\mathfrak{m}, {\bf L})$. See the right-hand side of Figure \ref{carteO2}. If we define the weight sequence ${\bf q}= (q_k)_{k\ge 1}$ by setting for all $k\ge 1$,
\begin{equation}\label{eq lien q qtilde}
	q_k = \tilde{q}_k + n h^{2k} F^{(k)}(\widetilde{\bf q}, h, n),
\end{equation}
then for all $\mathfrak{g}\in \mathcal{M}$,
$$
w_{\bf q}(\mathfrak{g})  = \sum_{\substack{(\mathfrak{m}, {\bf L}) \in \mathcal{LM}\\ \mathfrak{g} \text{ is the gasket of } (\mathfrak{m}, {\bf L})}}w_{\widetilde{\bf q}, h, n}(\mathfrak{m}, {\bf L}).
$$
In particular, summing over $(\mathfrak{m}, {\bf L})\in \mathcal{LM}^{(\ell)}$, we deduce that $W^{(\ell)} = F^{(\ell)}(\widetilde{\bf q}, h,n)$ for all $\ell \ge 1$, and that $\bf q$ is admissible if and only if the triple $(\widetilde{\bf q}, h,n)$ is admissible.

As in \cite{B18}, we say that the triple $(\widetilde{\bf q}, h,n)$ is non-generic critical if it is admissible, $n>0$ and $h = 1/c_{\bf q} $. Note that by \eqref{eq lien q qtilde}, a weight sequence ${\bf q} = (q_k)_{k\ge 1}$ is the weight sequence of the gasket of a non-generic critical $O(n)$ loop decorated map if and only if $\bf q$ is admissible and
\begin{equation}\label{eq gasket}
	\forall k\ge 1, \qquad \qquad q_k-n c_{\bf q}^{-2k} W^{(k)} \ge 0.
\end{equation}
Moreover, the associated triple $(\widetilde{\bf q}, h,n)$ of $O(n)$ loop-decorated maps is given by \eqref{eq lien q qtilde} and $h = 1/c_{\bf q}$. It is known since \cite{Kos88} that for $n \in (0,2)$, if the triple $(\widetilde{\bf q}, h,n)$ is non-generic critical, then we may have the following ``perimeter exponent'':
\begin{equation}\label{eq asymptotique fonction de partition}
	W^{(\ell)}\mathop{\sim}\limits_{\ell \to \infty} \frac{p_{\bf q}}{2} c_{\bf q}^{\ell+1} \ell^{-a}
\end{equation}
where $p_{\bf q}$ is some positive constant, $c_{\bf q} = 1/h$ and $a = 2\pm \arccos(n/2)/\pi$. See e.g.\@ Equations (15) and (16) of \cite{Kos88}, Equation (2.9) of \cite{BBD16} or Equation (3) of \cite{B18}. This perimeter exponent \eqref{eq asymptotique fonction de partition} is central in the study of random planar maps. Indeed, \eqref{eq asymptotique fonction de partition} means that $\bf q$ is non-generic critical of type $a$ in the sense of \cite{StFlour} (see Proposition 5.10 therein), or equivalently that the law of gasket $\P^{(\ell)}$ is the law of an $\alpha$-stable map with $\alpha = a-1/2$. The equivalent \eqref{eq asymptotique fonction de partition} actually determines the geometry of the gasket and thus many properties of critical $O(n)$ loop-decorated planar maps. The study of the geometry of $\alpha$-stable maps was initiated in \cite{LGM11} for the primal distances, then in \cite{BC,BCM} for the dual distances in the local limit and then in \cite{BBCK} for the dual distances to the root face under $\P^{(\ell)}$.

\subsection{Main results}

This work aims at establishing the asymptotic behaviour of the partition function, i.e.\@ the analogue of \eqref{eq asymptotique fonction de partition}, for critical $O(2)$ loop-decorated planar maps. Recall that a function $L :\R_+ \to \R$ is slowly varying if for all $\lambda>0$, the ratio $L(\lambda x)/L(x)$ converges to $1$ as $x\to \infty$.
\begin{theorem}\label{th carte o2}
	Let $\bf q$ be an admissible weight sequence satisfying \eqref{eq gasket} with $n=2$. Then there exists a slowly varying function $L_{\bf q}$ such that $1= O(L_{\bf q}(\ell))$, $L_{\bf q}(\ell) = O(\log \ell)$ and
	$$
	W^{(\ell)} \mathop{\sim}\limits_{\ell \to \infty } c_{\bf q}^{\ell+1} \frac{L_{\bf q}(\ell) }{2\ell^{2}}.
	$$
\end{theorem}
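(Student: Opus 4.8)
The plan is to translate the problem into fluctuation theory for a one-dimensional random walk and to exploit the fact that, for $n=2$, this walk sits exactly at a marginal (Cauchy-type) boundary, where logarithmic corrections are expected. First I would attach to the admissible sequence ${\bf q}$ the random walk $S$ on $\Z$ furnished by the standard peeling/Boltzmann-map correspondence: a step law $\nu = \nu_{\bf q}$ for which admissibility is equivalent to $\nu$ being a probability measure, and for which the criticality normalisation $h = 1/c_{\bf q}$ is equivalent to $S$ being centred (oscillating). In this correspondence the renormalised partition function $w_\ell := c_{\bf q}^{-\ell} W^{(\ell)}$ is, up to a multiplicative constant, a ladder-height renewal quantity of $S$, so that its large-$\ell$ behaviour is governed by the Wiener--Hopf factorisation $1 - \widehat\nu = (1-\varphi_+)(1-\varphi_-)$ and hence by the tails of $\nu$.

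The key structural input specific to the loop model is the self-consistency coming from \eqref{eq lien q qtilde}--\eqref{eq gasket}: with $n=2$ and $h=1/c_{\bf q}$ one has $q_k c_{\bf q}^{k} = \tilde{q}_k c_{\bf q}^{k} + 2 w_k$, so the right tail of $\nu$ is fed by the very quantity $w_k$ we are trying to estimate — geometrically, the gasket faces left by large outermost loops produce the heavy upward jumps of $S$. Running the Wiener--Hopf/Spitzer computation for the positivity parameter then yields, for $n \in (0,2)$, the exponent relation $a = 2 \pm \arccos(n/2)/\pi$ of \eqref{eq asymptotique fonction de partition}, the two branches being the dense and dilute phases. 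I would stress that at $n=2$ we have $\arccos(n/2)=0$: the two branches collide at the double root $a=2$, while simultaneously the upward tail of $\nu$ (with point mass $\asymp k^{-2}$ by the self-consistency) becomes of index exactly $1$, the boundary of the Cauchy domain, so that its truncated mean is only slowly varying. This confluence is precisely the mechanism forcing a slowly varying factor rather than a clean power law.

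The analytic heart, and the step I expect to be hardest, is then the marginal Wiener--Hopf analysis at this double root: away from $n=2$ the stable domain-of-attraction and Tauberian theorems deliver the clean asymptotics \eqref{eq asymptotique fonction de partition}, but at $a=2$ the leading coefficients of the two branches cancel and one must track the Wiener--Hopf factors and the Spitzer series to next order, controlling the resulting slowly varying remainder $L_{\bf q}$. Because the self-consistency puts $w_k$ on both sides of the relevant identity, this is really a fixed-point problem for $L_{\bf q}$, and solving it is where the real work lies.

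Finally, I would extract the two-sided bounds on $L_{\bf q}$. The lower bound $1 = O(L_{\bf q}(\ell))$ follows from the inequality $q_k c_{\bf q}^k \ge 2 w_k$ read off \eqref{eq gasket}, which feeds the upward tail of $\nu$ and thereby forbids $w_\ell$ from decaying faster than $\ell^{-2}$, keeping $L_{\bf q}$ bounded below; the upper bound $L_{\bf q}(\ell) = O(\log \ell)$ comes from the summability and centring constraints at the index-$1$ boundary, where an admissible truncated mean can grow at most logarithmically. Combining these bounds with the marginal asymptotics yields $W^{(\ell)} \sim c_{\bf q}^{\ell+1} L_{\bf q}(\ell)/(2\ell^{2})$, and the same fixed-point description of the pair $(\nu,w)$ gives the announced characterisation of critical $O(2)$ weight sequences.
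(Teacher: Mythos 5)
Your reduction to random-walk fluctuation theory matches the paper's first step: by Lemma 5.2 and Proposition 5.3 of \cite{StFlour} one attaches to $\bf q$ the probability law $\nu$ with $\nu(k-1)=q_k c_{\bf q}^{k-1}$ and $\nu(-k-1)=2W^{(k)}c_{\bf q}^{-k-1}$, and \eqref{eq gasket} becomes $\nu(k-1)\ge \nu(-k-1)$. (One inaccuracy already here: $w_k := c_{\bf q}^{-k}W^{(k)}$ is, up to scaling, the \emph{left tail of the step law itself}, not a ladder-height renewal quantity; it is $h^\downarrow$ that is the renewal function of the strict descending ladder heights.) The genuine gap is in what you call the analytic heart. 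You correctly diagnose the confluence at $a=2$ and announce a ``fixed-point problem for $L_{\bf q}$'' to be solved by tracking the Wiener--Hopf factors and the Spitzer series ``to next order'', but you offer no mechanism for solving it --- and that is where the whole proof lives. The paper's key observation, absent from your proposal, is that the problem is not genuinely circular: since $h^\downarrow$ is $\nu$-harmonic on $\Z_{\ge 1}$, it is the pre-renewal function of the walk, which forces the \emph{exact} identity $G^<(z)=1-\sqrt{1-z}$ (equation \eqref{eq G desc}). The Wiener--Hopf factorisation then becomes an exact relation between $\varphi$ and $G^\ge$ alone; after a symmetrisation and explicit Fourier computations, one obtains closed formulas \eqref{eq f k 2} and \eqref{eq nu k} expressing $\nu(-k)$ as an explicit series in the non-negative free parameters $g_j=\nu(j-1)-\nu(-j-1)=\tilde q_j c_{\bf q}^{j-1}$. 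The self-consistency you identify is thereby solved in closed form, not iteratively.

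From these formulas the paper extracts a case analysis that your outline misses entirely. Positivity of $\nu(-k)$ forces $\sum_j jg_j\le 1$ (Step 2) --- this, and not a general ``centring constraint at the index-$1$ boundary'', is the actual source of the upper bound $L_{\bf q}=O(\log)$. When $\sum_j jg_j<1$ one gets $\nu(-k)\sim c\log k/k^2$; when $\sum_j jg_j=1$ the behaviour splits further according to whether $\sum_\ell \vert f_\ell\vert$ converges. Your lower-bound argument (``positivity forbids $w_\ell$ from decaying faster than $\ell^{-2}$'') does not suffice in this last case: one must rule out $\sum_\ell f_\ell=0$, which would yield $\nu(-k)=o(k^{-2})$ and destroy the claimed asymptotics, and the paper does so by a delicate resummation (Step 7), using $\sum_j g_j j\log j<\infty$ and dominated convergence to show that $\sum_\ell f_\ell$ equals a manifestly positive sum because $\nu\neq\delta_0$ forces $g_j>0$ for some $j\ge 2$. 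Without the exact formula for $G^<$ coming from harmonicity of $h^\downarrow$, the explicit inversion giving $\nu(-k)$ in terms of the $g_j$, and this non-degeneracy argument, your proposal is a plausible road map of the right territory but not a proof.
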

Note that, contrary to the equivalent \eqref{eq asymptotique fonction de partition} in the case $n\in (0,2)$, in the above equivalent, a slowly-varying function appears. See Section \ref{section exemples} for some examples where the slowly-varying function is constant or of order $\log \ell$. This result shows that a weight sequence $\bf q$ satisfying \eqref{eq gasket} with $n=2$ is critical non-generic of type $a=2$, not necessarily in the sense of \cite{StFlour}, but in the more general setting of \cite{Ric18,CR20} who allow slowly varying functions. See Subsection 2.1 of \cite{CR20} for the exact definition. Equivalently the weight sequence of the gasket of a critical $O(2)$ loop-decorated map is critical non-generic of type $a=2$, i.e.\@ the gasket is a $3/2$-stable map in the sense of \cite{Ric18,CR20}. 

Actually, we will prove an analogous result on a class of random walks which will imply the above theorem. Recall that a function $h: \Z \to \R_+$ is $\nu$-harmonic on a subset $A \subset \Z$ for a positive measure $\nu$ on $\Z$ if for all $p\in A$, we have $h(p) = \sum_{k \in \Z} \nu(k) h(p+k)$.  Let $h^\downarrow: \Z \to \Z$ be the function defined by 
\begin{equation}\label{eq h fleche vers le bas}
	\forall \ell \ge 0, \quad h^\downarrow(\ell)= 2^{-2\ell}  \binom{2\ell}{\ell} \qquad \text{and} \qquad
	\forall \ell \le -1, \quad h^\downarrow(\ell)=0.
\end{equation}
\begin{theorem}\label{th marche}
	Let $\nu$ be a probability distribution on $\Z$ such that $h^\downarrow$ is $\nu$-harmonic on $\Z_{\ge 1}$ and such that $\nu$ is not the Dirac mass at zero $\delta_0$. Assume that
	\begin{equation}\label{eq marche}
		\forall k\ge 1, \qquad \qquad \nu(k-1) \ge \nu(-k-1).
	\end{equation}
	Then,
	\begin{itemize}
		\item Either $\sum_{k\ge 1} k(\nu(k-1)-\nu(-k-1))\in [0,1)$ and there exists $c>0$ such that
		$$
		\nu(-k)\mathop{\sim}\limits_{k \to \infty} \frac{c \log k}{k^2};
		$$
		\item Or $\sum_{k\ge 1} k(\nu(k-1)-\nu(-k-1))=1$ and there exists a slowly varying function $L$ such that $L(k)= o(\log k)$, $1= O(L(k))$ and
		$$
		\nu(-k)\mathop{\sim}\limits_{k \to \infty} \frac{L(k)}{k^2}.
		$$
	\end{itemize}
\end{theorem}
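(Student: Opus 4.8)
The plan is to read the hypothesis through the fluctuation theory of the random walk $(S_n)$ with step law $\nu$, and then to run a Tauberian analysis. Writing $\check\nu(k)=\nu(-k)$, the $\nu$-harmonicity of $h^\downarrow$ on $\Z_{\ge1}$ says that $(\check\nu\ast h^\downarrow)(p)=h^\downarrow(p)$ for all $p\ge1$, i.e. the sequence $(\delta_0-\check\nu)\ast h^\downarrow$ is supported on $\Z_{\le0}$. Passing to generating functions and using that, by \eqref{eq h fleche vers le bas}, $\sum_{\ell\ge0}h^\downarrow(\ell)z^\ell=(1-z)^{-1/2}$, this is precisely a Wiener--Hopf factorisation: with $\hat\nu(w)=\sum_k\nu(k)w^k$ one gets
$$
1-\hat\nu(w)=\bigl(1-\hat\phi^+(w)\bigr)\,(1-1/w)^{1/2},
$$
where the first factor is a power series in $w$ and the second a power series in $1/w$. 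By uniqueness of the factorisation, the identity $(1-1/w)^{1/2}=1-\hat\chi^-(1/w)$ identifies the strict descending ladder height $\chi^-$ as the \emph{universal} law with $\sum_{k\ge1}\P(\chi^-=k)z^k=1-\sqrt{1-z}$ (so that $h^\downarrow$ is its renewal function and $q(n):=\P(\chi^-=n)\sim\tfrac{1}{2\sqrt\pi}n^{-3/2}$), while $1-\hat\phi^+$ encodes the weak ascending ladder height $H^+$, whose law $\phi^+$ is the only freedom the hypothesis leaves open.

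Extracting the coefficient of $w^{-k}$ from the factorisation yields, for $k\ge1$,
$$
\nu(-k)=\sum_{m\ge0}b(m)\,q(m+k),\qquad b(0)=1-\phi^+(0),\quad b(m)=-\phi^+(m)\ (m\ge1),
$$
with $\sum_{m}b(m)=1-\hat\phi^+(1)$. First I would use \eqref{eq marche} to show that $H^+$ is \emph{proper}, i.e. $\hat\phi^+(1)=1$ and the walk oscillates. Indeed, if $\hat\phi^+(1)<1$ (transience to $-\infty$), dominated convergence in the display above forces $\nu(-k)\sim(1-\hat\phi^+(1))\,q(k)$, hence a genuine $k^{-3/2}$ tail; but such an asymmetric heavy negative tail is incompatible with the inequality $\nu(k-1)\ge\nu(-k-1)$. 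Properness makes the leading $k^{-3/2}$ term cancel, and after a telescoping one is left with
$$
\nu(-k)=\sum_{i\ge0}\P(H^+>i)\,\bigl(q(k+i)-q(k+i+1)\bigr).
$$

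Since $q(n)-q(n+1)\sim\tfrac{3}{4\sqrt\pi}n^{-5/2}$, a Karamata-type estimate shows that the sum above is of order $k^{-2}$ exactly when the ascending tail is regularly varying of index $-1/2$, namely $\P(H^+>i)\sim C\,\ell(i)\,i^{-1/2}$ for some slowly varying $\ell$; equivalently $1-\hat\phi^+$ has an index-$1/2$ singularity at $w=1$, so that the product $1-\hat\nu$ has an index-$1$ singularity times a slowly varying factor, which transfers to $\nu(-k)\sim C'\,\ell(k)\,k^{-2}$. This already produces the $k^{-2}$ law with a slowly varying prefactor, and it remains only to match $\ell$ with the value of $m$.

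The crux, and what I expect to be the main obstacle, is this last identification. The quantity $m=\sum_{k\ge1}k\bigl(\nu(k-1)-\nu(-k-1)\bigr)$ is a regularised mean of the step distribution ($m=1+\E[X]$ whenever $\E[X]$ exists, and in general the regularised drift obtained by cancelling the two heavy tails allowed by \eqref{eq marche}); I would relate it to the strength of the index-$1/2$ singularity of $1-\hat\phi^+$ via a two-sided Fourier--Tauberian analysis of $1-\hat\nu(e^{i\theta})$ as $\theta\to0$. The outcome to be established is the sharp dichotomy: the critical case of vanishing regularised drift, $m=1$, corresponds to $\ell=o(\log)$ with $\ell$ bounded below (giving $1=O(L)$ and $L=o(\log k)$), whereas a strictly negative regularised drift, $m\in[0,1)$, forces $\ell(k)\sim c\log k$ and hence $\nu(-k)\sim c\log k/k^2$. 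The delicate points are handling slowly varying corrections for two-sided heavy-tailed laws on the unit circle, feeding the positivity and monotonicity constraints from \eqref{eq marche} into the Tauberian estimates, and controlling the boundary between the two regimes so that the logarithm appears precisely when $m<1$.
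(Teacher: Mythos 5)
Your setup is the same as the paper's: $\nu$-harmonicity of $h^\downarrow$ identifies the strict descending ladder height law via $G^<(z)=1-\sqrt{1-z}$, and the Wiener--Hopf factorisation reduces everything to the unknown ascending factor; your telescoped identity $\nu(-k)=\sum_{i\ge0}\P(H_1^\ge>i)\,(q(k+i)-q(k+i+1))$ is a legitimate variant of the paper's representation through $f(z)=\frac{z}{\sqrt{1-z}}(1-G^\ge(z))$. But from that point on you assert the theorem rather than prove it. The decisive claim --- that $\P(H_1^\ge>i)$ is regularly varying of index $-1/2$, with slowly varying part $\sim c\log i$ when $m:=\sum_j jg_j<1$ and $o(\log)$ yet bounded below when $m=1$ --- \emph{is} the content of the theorem, and you explicitly defer it (``the outcome to be established''). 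No mechanism is supplied, and a ``two-sided Fourier--Tauberian analysis'' cannot supply one: Tauberian theorems convert a regularly varying input into a regularly varying output, whereas here nothing is regularly varying by hypothesis. The paper's substitute is entirely explicit: Fourier inversion of the symmetrised Wiener--Hopf identity gives the closed formula \eqref{eq f k 2} for the coefficients $f_k$ in terms of the nonnegative $g_j=\nu(j-1)-\nu(-j-1)$ (this is the only place the sign hypothesis \eqref{eq marche} can be used pointwise), then the summable-error estimate \eqref{eq sommable2}, $f_\ell=\frac{2}{\pi\ell}\bigl(1-\sum_{j\le\ell/2}jg_j\bigr)+O(\text{summable})$, injects $m$ directly into $\nu(-k)$ via \eqref{eq nu k} and yields $\nu(-k)\sim\frac{2(1-m)}{\pi^2}\frac{\log k}{k^2}$ when $m<1$ by comparison with $\int \frac{dx}{x\,(x+k)^2}$. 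Nothing in your plan replaces these computations.

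Three concrete gaps remain even granting your framework. First, the theorem tacitly asserts $m\le1$; the paper proves this (Step 2) by showing $m>1$ would force $f_\ell\le-\vp/\ell$ eventually and hence $\nu(-k)<0$, a contradiction --- your proposal imposes no constraint ruling out $m>1$. Second, in the case $m=1$ the whole difficulty is the dichotomy according to whether $\sum_\ell|f_\ell|<\infty$: if not, one must \emph{construct} the slowly varying function as $L(x)=\sum_\ell f_\ell\frac{4x^2}{4(\ell+x-1)^2-1}$ and prove $L(\lambda x)-L(x)=O(1)$ while $L(x)\to\infty$ (Step 6); if so, one must prove the limiting constant $\sum_\ell f_\ell$ is nonzero, which the paper does by an exact resummation giving $\sum_\ell f_\ell=\frac1\pi\sum_j g_j\sum_{m=0}^{j-1}\sum_{\ell=0}^{m}\bigl(\frac{1}{\ell+1/2}+\frac{1}{\ell-1/2}\bigr)$, strictly positive because $\nu\neq\delta_0$ excludes $g_1=1$ (Step 7). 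Tellingly, your argument never uses the hypothesis $\nu\neq\delta_0$, yet the theorem fails without it ($\nu=\delta_0$ has $m=1$ and $\nu(-k)\equiv0$), so any complete proof must invoke it somewhere; yours has no place where it could enter. Third, your properness step ($G^\ge(1)=1$) via ``$k^{-3/2}$ tail incompatible with \eqref{eq marche}'' is only a sketch --- it needs an Erickson-type or stable-limit argument to rule out drift to $-\infty$ under term-by-term tail domination --- although this one is repairable, and in the paper it comes for free since $g_j\ge0$ in \eqref{eq f k 2} forces $f_k=O(1/k)$, which is incompatible with the $k^{-1/2}$ behaviour a defective ascending ladder would impose on $f$.
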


\subsection{Applications and connections with other works}
Here, we mention some consequences of Theorem \ref{th carte o2} and we discuss connections with other works. 
In \cite{LGM11}, Le Gall and Miermont prove that $\alpha$-stable maps with $n$ vertices, when equipped with the primal graph distance, where each edge has length one, satisfy a scaling limit along subsequences towards a random metric space whose Hausdorff dimension is $2\alpha$. This result extends to the more general definition of $\alpha$-stable maps from \cite{Ric18,CR20} by adding a slowly varying function, and thus in particular to the gasket of critical (rigid) $O(2)$ loop-decorated maps.

Besides, one can also equip the gasket with some dual distances, where large faces become hubs so that the geometry lies in another universality class. In \cite{Kam23}, under $\P^{(\ell)}$ as $\ell\to \infty$, we establish the scaling limit of the first passage percolation distance $d^\dagger_{\mathrm{fpp}}$, obtained by putting i.i.d.\@ exponential random lengths of parameter $1$ on each dual edge, and the dual graph distance $d^\dagger_{\mathrm{gr}}$, obtained by assigning a length $1$ to each dual edge, between two uniform random vertices in a $3/2$-stable map and show that their diameter is of the same order. Here, the slowly varying function $L_{\bf q}$ significantly modifies the asymptotic behaviour of these distances.

In \cite{Kam23CLE}, we express the scaling limit of the distances $d^\dagger_\mathrm{gr}$ and $d^\dagger_\mathrm{fpp}$ from the faces of high degree to the root face. There is no doubt that the analogues of Theorem 1.3 and Proposition 6.6 in \cite{Kam23CLE} hold up to some slowly varying functions in the more general definition of $3/2$-stable maps from \cite{Ric18,CR20}. As a consequence, the results stated in Section 7.1 of \cite{Kam23CLE} are valid for any model of critical (rigid) $O(2)$ loop-decorated planar maps, up to some slowly varying functions. This gives a new piece of evidence that the scaling limit of $O(2)$ loop-decorated maps is a critical Liouville quantum gravity disk decorated with an independent conformal loop ensemble of parameter $\kappa =4$.

In \cite{B18}, Budd introduces a peeling exploration of $O(n)$ loop-decorated maps, which extends his peeling exploration of Boltzmann maps introduced in \cite{B16}, and obtains in Theorem 3 the scaling limit of the perimeter process as a positive self-similar Markov process. Thanks to our result, Theorem 3 of \cite{B18} has an analogue in the case $n=2$ for the pointed critical $O(2)$ loop-decorated map: one can see using Proposition 10 of \cite{B18} that the perimeter process $(P_i)_{i\ge 0}$ in Theorem 3 of \cite{B18} is the absolute value of a $\nu$-random walk stopped when it reaches zero (since the harmonic function $h^\downarrow_\mathfrak{p}$ defined in Equation (5) of \cite{B18} is constant) and the limiting process becomes the absolute value of a Cauchy process. In our case, a slowly varying function may appear.

Besides, the results of \cite{CCM20} describe the scaling limit of the sizes of the nested loops in terms of multiplicative cascades in the particular case of $O(n)$ loop-decorated quadrangulations and \cite{AdSH24} give the scaling limit of the volume of $O(n)$ loop-decorated quadrangulations. The case $n=2$ is treated by A\"id\'ekon, Da Silva and Hu in a new version of \cite{AdSH24} for $O(2)$ loop-decorated quadrangulations.

The dichotomy of Theorem \ref{th marche} appears in the particular case of $O(2)$ loop-decorated quadrangulations, i.e.\@ when $\widetilde{q}_k=0$ for all $k\neq 2$ and in the second case the slowly varying function is constant. More precisely, assume that $\bf q$ satisfies \eqref{eq asymptotique fonction de partition} with $n=2$ is obtained with a triple $(\widetilde{\bf q}, h,2)$ such that $\widetilde{q}_k=0$ for all $k\neq 2$. Then, as shown in Equation (1.7) of \cite{AdSH24}, 
\begin{itemize}
	\item Either $h>4/(3\pi^2)$, and the slowly varying function $L_{\bf q}$ is logarithmic, so that $W^{(\ell)} \sim  c_{\bf q}^{\ell+1} {c\log (\ell) }/({2\ell^{2}})$ as $\ell \to \infty$ for some constant $c>0$ (see also Subsection \ref{sous-section fully packed} for an example);
	\item Or $h=4/(3\pi^2)$ and the slowly varying function $L_{\bf q}$ is constant so that $W^{(\ell)} \sim  c_{\bf q}^{\ell+1} {c }/({2\ell^{2}})$ for some $c>0$.
\end{itemize}

The techniques of \cite{AdSH24} in the case of $O(2)$ loop-decorated quadrangulations, relying on approximation results as $n \uparrow 2$, were developed simultaneously with ours and are of independent interest. In view of Theorem \ref{th carte o2}, we conjecture that the analogous results hold for any model of (rigid) critical $O(2)$ loop-decorated maps. 

\subsection{Outline}
In Section \ref{section Wiener-Hopf}, we  describe our key tool: the Wiener--Hopf factorisation. In Section \ref{section preuves}, we prove Theorems \ref{th carte o2} and \ref{th marche}. Finally, in Section \ref{section exemples} we show that the computations in the proofs of Theorems \ref{th carte o2} and \ref{th marche} enable to characterise weight sequences of critical $O(2)$ loop-decorated planar maps and we give examples.

\section{Wiener--Hopf factorisation}\label{section Wiener-Hopf}
We recall here the Wiener--Hopf factorisation of a random walk on $\Z$. See e.g.\@ Section 2.3 of the lecture notes \cite{BuddPeeling} for more details on this tool and some applications to random planar maps, or Theorem 9.15 in the book \cite{Kal02}.

Let $\nu$ be a probability law on $\Z$. Let $(S_n)_{n\ge 0}$ be a random walk of step distribution $\nu$ starting at zero. The weak ascending ladder epochs $(T_i^\ge)_{i\ge 0}$ are the times at which $S_n$ reaches its running maximum. In other words, $T_0^\ge=0$ and for all $i \ge 0$,
$$
T_{i+1}^\ge = \inf\{n > T_i^\ge , \ S_n = \max_{0 \le k \le n} S_k\},
$$
where by convention $\inf \emptyset = \infty$. The weak ascending ladder heights $(H_i^\ge)_{i\ge 0}$ are defined by $H_i^\ge = S_{T_i^\ge}$ when $T_i^\ge <\infty$ and $H_i^\ge = \partial$ when $T_i^\ge = \infty$, where $\partial$ is a cemetery point. Similarly, the strict descending ladder epochs $(T^<_i)_{i\ge 0}$ are defined by $T^<_0= 0$ and for all $i\ge 0$, 
$$
T_{i+1}^< = \inf\{n > T_i^< , \ S_n < \min_{0 \le k \le n-1} S_k\}.
$$
The strict descending ladder heights $(H_i^<)_{i\ge 0}$ are defined by $H_i^< = S_{T_i^<}$ when $T_i^< <\infty$ and $H_i^< = \partial$ when $T_i^<= \infty$.

We denote the generating functions of $H^\ge_1$ and $H^<_1$ by
$$
G^\ge(z) = \E \left( z^{H_1^\ge}\right) = \sum_{k=0}^\infty z^k \P(H_1^\ge = k)
\quad \text{and} \quad
G^<(z) = \E \left( z^{H_1^<}\right) = \sum_{k=0}^\infty z^k \P(H_1^< = k)
$$
Note that when $S$ oscillates, $\P(H_1^\ge= \partial)= \P(H_1^<= \partial)=0$ so that $G^\ge(1)= G^<(1)=1$. We also denote by $\varphi$ the characteristic function of $S_1$ defined by $\varphi(\theta)= \E(e^{iS_1 \theta})$ for all $\theta \in \R$. The following proposition relates the functions $\varphi, G^\ge$ and $ G^<$.
\begin{proposition}\label{prop wiener hopf}(Wiener--Hopf factorisation)
	For all $\theta \in \R$, 
	$$
	1-\varphi(\theta) = (1-G^\ge(e^{i\theta}))(1-G^<(e^{-i\theta})).
	$$
\end{proposition}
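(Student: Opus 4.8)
The plan is to prove the identity first with a geometric discount $s\in[0,1)$ and then let $s\uparrow 1$. For $s\in[0,1)$ introduce the discounted ladder transforms
$$
\chi^\ge_s(\theta)=\E\!\left[s^{T^\ge_1}e^{i\theta H^\ge_1}\,;\,T^\ge_1<\infty\right],
\qquad
\chi^<_s(\theta)=\E\!\left[s^{T^<_1}e^{i\theta H^<_1}\,;\,T^<_1<\infty\right].
$$
It then suffices to establish, for each fixed $s\in[0,1)$, the discounted factorisation
$$
1-s\varphi(\theta)=\bigl(1-\chi^\ge_s(\theta)\bigr)\bigl(1-\chi^<_s(\theta)\bigr).
$$
Indeed, the coefficient of $e^{ik\theta}$ in $\chi^\ge_s(\theta)$ is $\sum_n s^n\P(T^\ge_1=n,H^\ge_1=k)$, which increases to $\P(H^\ge_1=k)$ as $s\uparrow1$; since the total masses are bounded by $1$, dominated convergence gives $\chi^\ge_s(\theta)\to G^\ge(e^{i\theta})$ and, likewise, $\chi^<_s(\theta)\to G^<(e^{-i\theta})$ pointwise. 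As $1-s\varphi(\theta)\to1-\varphi(\theta)$ as well, the identity passes to the limit $s=1$. When $S$ oscillates one moreover has $\P(T^\ge_1<\infty)=\P(T^<_1<\infty)=1$, whence $G^\ge(1)=G^<(1)=1$ as noted in the text.

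Inverting, the discounted factorisation is equivalent to the multiplicative identity for the resolvent,
$$
\frac{1}{1-s\varphi(\theta)}=U^\ge_s(\theta)\,U^<_s(\theta),
\qquad
U^\ge_s(\theta)=\sum_{n\ge0}s^n\,\E\!\left[e^{i\theta S_n}\,;\,S_n=\max_{0\le k\le n}S_k\right],
$$
with $U^<_s$ the analogous sum restricted to the instants where $S_n<\min_{0\le k\le n-1}S_k$ (the strict descending ladder points, together with $n=0$); the left-hand side is simply $\sum_{n\ge0}s^n\varphi(\theta)^n$ since $\E[e^{i\theta S_n}]=\varphi(\theta)^n$. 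The two factors have a renewal interpretation: the times at which $S_n$ equals its running maximum are exactly the weak ascending ladder epochs $\{T^\ge_j\}$, and their increments are i.i.d.\ copies of $(T^\ge_1,H^\ge_1)$ by the strong Markov property, so
$$
U^\ge_s(\theta)=\sum_{j\ge0}\E\!\left[s^{T^\ge_j}e^{i\theta H^\ge_j}\,;\,T^\ge_j<\infty\right]=\sum_{j\ge0}\chi^\ge_s(\theta)^j=\frac{1}{1-\chi^\ge_s(\theta)},
$$
and symmetrically $U^<_s=1/(1-\chi^<_s)$; this part is routine.

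The core of the argument is the identity $\tfrac{1}{1-s\varphi}=U^\ge_s\,U^<_s$ itself, which I would obtain from a path decomposition combined with time reversal. Fix $n$ and split a path $(0,S_1,\dots,S_n)$ at $\rho=\max\{k\le n:S_k=\max_{0\le j\le n}S_j\}$, the last time it attains its overall maximum. The prefix $(S_0,\dots,S_\rho)$ ends at its own weak maximum, hence contributes a term to $U^\ge_s$, while by maximality of $\rho$ the suffix $(S_\rho,\dots,S_n)$ stays strictly below the level $S_\rho$. The duality map $\widehat S_k=S_n-S_{n-k}$, which preserves the law of the walk because the increments are i.i.d., turns the suffix into a path ending at a strict minimum, i.e.\ a term of $U^<_s$, with the same length, the same endpoint increment $S_n-S_\rho=\widehat S_{n-\rho}$ and the same probability; since $S_n=\widehat S_n$, both the weight $e^{i\theta S_n}$ and the discount $s^n$ factor multiplicatively over the two pieces. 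The map from a path to the pair (weak-maximum-ending prefix, reversed strict-minimum-ending suffix) is a bijection, which yields the product. The step I expect to be the main obstacle is getting the weak/strict pairing exactly right: weak ascending epochs ($S_n\ge\max_{k<n}S_k$, ties at the maximum kept in the prefix up to the \emph{last} maximum) must be matched precisely with strict descending epochs ($S_n<\min_{k<n}S_k$) so that the splitting at $\rho$ neither loses nor double-counts any path.
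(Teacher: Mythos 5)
Your proof is correct and follows precisely the classical argument behind the result, which the paper itself does not prove but quotes from the literature (Budd's lecture notes and Theorem 9.15 of Kallenberg): the discounted factorisation $1-s\varphi=(1-\chi^\ge_s)(1-\chi^<_s)$ obtained from splitting the path at the \emph{last} running maximum combined with time reversal, followed by the limit $s\uparrow 1$. The weak/strict pairing you flagged as the main obstacle is in fact resolved correctly by your choice of the last-maximum split, which yields exactly the pairing (weak ascending, strict descending) appearing in the paper's statement, whereas a first-maximum split would give the complementary (strict ascending, weak descending) version.
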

The Wiener--Hopf factorisation was also used for the study of the peeling exploration of $O(n)$ loop-decorated planar maps by Timothy Budd in \cite{B18}.
\section{Proof of the main results}\label{section preuves}
\subsection{Proof of Theorem \ref{th carte o2} using Theorem \ref{th marche}}
\begin{proof}[Proof of Theorem \ref{th carte o2}]
	Let $\bf q$ be an admissible weight sequence satisfying \eqref{eq gasket}. By Lemma 5.2 of \cite{StFlour}, the measure $\nu$ on $\Z$ defined by setting for all $k\ge 0$,
	\begin{equation}\label{eq lien q nu}
		\nu(k) = q_{k+1} c_{\bf q}^k \qquad \text{and} \qquad \nu(-k-1 )  = 2 W^{(k)} c_{\bf q}^{-k-1}
	\end{equation}
	is a probability measure. Moreover, by Proposition 5.3 of \cite{StFlour}, the function $h^\downarrow$ is $\nu$-harmonic on $\Z_{\ge 1}$. Furthermore, \eqref{eq gasket} can be rewritten
	$$
	\forall k\ge 1, \qquad \nu(k-1) \ge \nu(-k-1).
	$$
	Finally, $\nu(-1)  = 2/c_{\bf q} \neq 0$ so that $\nu \neq \delta_0$. Thus, Theorem \ref{th marche} concludes the proof.
\end{proof}
\subsection{Proof of Theorem \ref{th marche}}
\begin{proof}[Proof of Theorem \ref{th marche}]
	It is well known by \cite{Don99} that, since $h^\downarrow$ is $\nu$-harmonic on $\Z_{\ge1}$ and $h^\downarrow(0)=1$, the function $h^\downarrow$ corresponds to the pre-renewal function of the random walk $S$, i.e.\@ for all $\ell \ge 0$, 
	$$
	h^\downarrow(\ell) 
	= \sum_{p\ge 0} \P(H^<_p= \ell),
	$$
	so that by the strong Markov property,
	$$
	\sum_{\ell \ge 0} h^\downarrow(\ell) z^\ell
	= \sum_{\ell \ge 0} \sum_{p \ge 0} \sum_{\substack{\ell_1, \ldots, \ell_p\ge 0 \\ \ell_1+\ldots +\ell_p=\ell}} \prod_{j=1}^p \left(\P(H^<_1= \ell_j)z^{\ell_j}\right)
	= \sum_{p\ge 0} G^<(z)^p= \frac{1}{1-G^<(z)}.
	$$
	Thus, in view of \eqref{eq h fleche vers le bas}, one computes
	\begin{equation}\label{eq G desc}
		G^<(z) = 1- \sqrt{1-z}.
	\end{equation}
	Besides, let us define the power series with non-negative coefficients
	$$
	g(z) = \sum_{k\ge 1} g_k z^k \coloneqq \sum_{k\ge 1} (\nu(k-1)-\nu(-k-1)) z^k.
	$$
	\emph{Step 1.} Our aim is to express $\nu(-k)$ using the $g_k$'s. Note that for all $\theta \in \R$,
	\begin{align*}
		\varphi(\theta) = &\sum_{k\le -2} \nu(k) e^{ik\theta} + \nu(-1)e^{-i\theta} + \sum_{k\ge 0}\nu(k) e^{ik\theta} \\
		&= e^{-i\theta}\nu(-1) + e^{-i\theta} g(e^{i\theta})
		+ e^{-i\theta} \sum_{k\ge 1} \nu(-k-1) (e^{ik\theta}+ e^{-ik\theta}).
	\end{align*}
	In particular, for all $\theta \in \R$,
	$$
	e^{i\theta}\varphi(\theta) - e^{-i\theta}\varphi(-\theta) = g(e^{i\theta}) - g(e^{-i\theta}).
	$$
	By taking \eqref{eq G desc} and Proposition \ref{prop wiener hopf} into account, we deduce that for all $\theta \in \R$,
	\begin{align}
		&g(e^{i\theta})-g(e^{-i\theta})\notag\\
		&= e^{i\theta}-e^{-i\theta} - e^{i\theta} \sqrt{1-e^{-i\theta}} \left(1-G^\ge(e^{i\theta})\right) + e^{-i\theta} \sqrt{1-e^{i\theta}} \left(1-G^\ge (e^{-i\theta})\right).\label{eq wiener hopf symetrise}
	\end{align}
	Let us define the power series
	\begin{equation}\label{eq def f}
		f(z) = \sum_{k\ge 1} f_k z^k \coloneqq \frac{z}{\sqrt{1-z} }\left(1-G^\ge(z)\right).
	\end{equation}
	We first compute the $f_k$'s and then express the $\nu(-k)$'s using the $f_k$'s. Note that $\theta\mapsto f(e^{i\theta}) \in \mathrm{L}^1([0,2\pi])$ and that \eqref{eq wiener hopf symetrise} can be rewritten as
	$$
	\frac{1}{\sqrt{1-e^{i\theta}}\sqrt{1-e^{-i\theta}}} \left(-g(e^{i\theta})+ g(e^{-i\theta})+ e^{i\theta}-e^{-i\theta}\right) = f(e^{i\theta})-f(e^{-i\theta}),
	$$
	hence, for all $k\ge 1$,
	\begin{align}
		f_k &= \frac{1}{2\pi} \int_0^{2\pi} \frac{1}{\sqrt{1-e^{i\theta}}\sqrt{1-e^{-i\theta}}} \left(-g(e^{i\theta})+ g(e^{-i\theta})+ e^{i\theta}-e^{-i\theta}\right)  e^{-ik\theta} d\theta \notag\\
		&=\frac{1}{2\pi} \int_0^{2\pi} \frac{1}{\vert 1-e^{i\theta} \vert} \left(2i\sin \theta - \sum_{j \ge 1} g_j 2 i \sin(j \theta) \right) e^{-ik\theta} d\theta \notag\\
		&= \frac{1}{2\pi} \int_0^{2\pi}  \frac{1}{ \sin (\theta/2)} \left( \sin \theta - \sum_{j\ge 1} g_j \sin(j \theta) \right) \sin (k\theta) d\theta.\label{eq f k}
	\end{align}
	Next, for all integers $k, j \ge 1$,
	\begin{align*}
		\int_0^{2 \pi} \frac{\sin(k\theta)}{\sin(\theta/2)} \sin(j\theta) d\theta
		&=\int_0^{2 \pi} \frac{e^{2ki\theta}-1}{e^{i\theta}-1} e^{i(1/2- k)\theta} \sin(j\theta) d\theta \\
		&= \int_0^{2\pi} \sum_{m=0}^{2k-1} e^{i(m+1/2-k)\theta} \sin(j \theta) d \theta \\
		&= 2\sum_{m=0}^{k-1} \int_0^{2\pi} \cos((m+1/2)\theta) \sin(j\theta) d \theta\\
		&=2\sum_{m=0}^{k-1} \int_0^{2\pi} \frac{1}{2}\left( \sin((m+j+1/2)\theta) - \sin((m-j+1/2)\theta) \right) d\theta\\
		&= 2 \sum_{m=0}^{k-1} \left( \frac{1}{m+j+1/2} - \frac{1}{m-j+1/2} \right) \\
		&= 2 \sum_{m=j}^{k+j-1} \frac{1}{m+1/2} - 2 \sum_{m=-j}^{k-j-1} \frac{1}{m+1/2}\\
		&=2 \sum_{m=k-j}^{k+j-1} \frac{1}{m+1/2}.
	\end{align*}
	Therefore, \eqref{eq f k} and the fact that $\sum_{j\ge 1} \vert g_j \vert \le 1< \infty$ yield, by Fubini-Tonelli, for all $k\ge 1$,
	\begin{equation}\label{eq f k 2}
		f_k = \frac{1}{\pi}\left( \frac{1}{k-1/2} + \frac{1}{k+1/2}\right) - \frac{1}{\pi} \sum_{j \ge 1} g_j\sum_{m=k-j}^{k+j-1} \frac{1}{m+1/2}.
	\end{equation}
	Then, let us compute $\nu(-k)$ for $k\ge 1$ in terms of the $f_k$'s. By Proposition \ref{prop wiener hopf}, \eqref{eq G desc} and \eqref{eq def f}, for all $\theta \in (0,2 \pi)$,
	\begin{equation}\label{eq lien phi f}
		1- \varphi(\theta) = \sqrt{1- e^{-i\theta}} \left( 1- G^\ge (e^{i\theta})\right) = \frac{\sqrt{1-e^{-i\theta}}\sqrt{1-e^{i\theta}}}{e^{i\theta}} f(e^{i\theta}) = \frac{2 \vert \sin(\theta/2) \vert}{e^{i\theta}} f(e^{i\theta}).
	\end{equation}
	Consequently, assuming that $\theta \mapsto f(e^{i\theta}) \in \mathrm{L}^2([0,2\pi])$ for all $k \in \Z $, 
	\begin{align*}
		\nu(k) &={\bf 1}_{k=0}- \frac{1}{2\pi} \int_0^{2 \pi} \frac{2 \sin(\theta/2)}{e^{i\theta}} \sum_{\ell \ge 1} f_\ell e^{ i\ell \theta} e^{-ik \theta} d \theta
		\\
		&= {\bf 1}_{k=0}-\frac{1}{\pi} \sum_{\ell \ge 1}  \int_0^{2 \pi} f_\ell \sin(\theta/2) \cos((\ell-k-1)\theta)d\theta
		\\
		&={\bf 1}_{k=0}-\frac{1}{2\pi} \sum_{\ell \ge 1} \int_0^{2 \pi}  f_\ell\left(\sin((\ell-k-1/2)\theta) - \sin((\ell+k- 3/2)\theta)\right)d\theta
		\\
		&={\bf 1}_{k=0}+\frac{1}{\pi} \sum_{\ell \ge 1} f_{\ell}  \left( \frac{1}{\ell + k - 3/2} - \frac{1}{\ell-k-1/2}\right),
	\end{align*}
	where in the second equality we use the assumption that $\theta \mapsto  f(e^{i\theta}) \in \mathrm{L}^2([0,2\pi])$ in order to exchange the series and the integral. Thus, for all $k\in \Z $, 
	\begin{equation}\label{eq nu k}
		\nu(k) = {\bf 1}_{k=0}+\frac{1}{\pi} \sum_{\ell \ge 1} f_\ell \frac{4}{4(\ell-k-1)^2-1}.
	\end{equation}
	The above equality is also valid when we do not assume that $\theta \mapsto f(e^{i\theta}) \in \mathrm{L}^2([0,2\pi])$ by density of $\mathrm{L}^2([0,2\pi])$ in $\mathrm{L}^1([0,2\pi])$, since $\nu(k)  = {\bf 1}_{k=0}  - (1/(2\pi))\int_0^{2\pi} (2 \vert \sin (\theta/2)\vert)e^{-i\theta} f(e^{i\theta}) e^{-ik\theta}d\theta$ and $f_\ell = (1/(2\pi))\int_0^{2\pi} f(e^{i\theta}) e^{-i\ell \theta} d\theta$.
	In view of \eqref{eq f k 2}, we have completed the first step.
	
	\emph{Step 2.} Let us now prove that $\sum_{j \ge 1} j g_j \le1$. This will be useful in order to obtain the asymptotics of $f_k$ as $k \to \infty$. Assume by contradiction that $\sum_{j \ge 1} jg_j \in (1, \infty]$. Then, by \eqref{eq f k 2},
	$$
	\forall \ell \ge 1, \qquad \qquad f_\ell\le \frac{2}{\pi} \left( \frac{1}{\ell-1/2} - \sum_{j\ge 1} jg_j \frac{1}{j+\ell-1/2} {\bf 1}_{j\le \ell}\right),
	$$
	so that there exist $\vp >0$ and $\ell_0\ge 1$ such that for all $\ell\ge \ell_0$, $f_\ell \le -\vp/\ell$. So
	\begin{align*}
		&\sum_{\ell \ge \ell_0} f_\ell \frac{4}{4(\ell+k-1)^2-1}\le -\vp \sum_{\ell \ge \ell_0} \frac{1}{\ell}  \frac{4}{4(\ell+k-1)^2-1}\\
		&\le -\vp \int_{\ell_0}^\infty \frac{1}{x}  \frac{4}{4(x+k-1)^2-1}dx\sim -\vp\frac{\log k}{k^2}
	\end{align*}
	as $k \to \infty$, where the equivalent is obtained after taking the change of variable $y=x/k$. But the sum of the first $\ell_0$ terms is a $O(1/k^2)$, so that by \eqref{eq nu k} for all $k$ large enough, $\nu(-k)<0$, absurd. As a result, $\sum_{j \ge 1} j g_j\le 1$. 
	
	\emph{Step 3.} Let us show that
	\begin{equation}\label{eq sommable}
		\sum_{k \ge 1} \sum_{j \ge k/2} g_j \sum_{m=k-j}^{k+j-1} \frac{1}{m+1/2} <\infty.
	\end{equation}
	One can first see that for all $k \ge1$,
	$$
	\sum_{j \ge 2k} g_j \sum_{m=k-j}^{k+j-1} \frac{1}{m+1/2}= 
	\sum_{j \ge 2k} g_j \sum_{m=j-k}^{k+j-1} \frac{1}{m+1/2} 
	\le  \sum_{j \ge 2k} g_j 2k \frac{1}{k}
	\le 2\sum_{j \ge k} g_j 
	$$
	and $\sum_{k\ge 1} \sum_{j \ge k} g_j =\sum_{j\ge 1} j g_j <\infty$. 
	
	Moreover,
	\begin{align*}
		\sum_{k\ge 1} \sum_{j=k}^{2k}g_j \sum_{m=k-j}^{k+j-1} \frac{1}{m+1/2} &=
		\sum_{k\ge 1} \sum_{j=k}^{2k}g_j \sum_{m=j-k}^{k+j-1} \frac{1}{m+1/2}\\
		&\le C \sum_{k\ge 1} \sum_{j=k}^{2k}g_j \log \left( \frac{j+ k+ 1/2}{j-k+1/2}\right)\\
		&=C \sum_{j \ge 1} \sum_{j/2 \le k \le j} g_j \log \left( \frac{j+ k+ 1/2}{j-k+1/2}\right)\\
		&\le C' \sum_{j\ge 1} g_j \int_{j/2}^j \log \left( \frac{j+x+1/2}{j-x+1/2}\right) dx,
	\end{align*}
	where $C,C'$ are positive constants. The sum in the last line is finite since for all $j\ge 1$,
	\begin{align*}
		\int_{j/2}^j \log &\left( \frac{j+x+1/2}{j-x+1/2}\right) dx
		\\=&\left(2j+\frac{1}{2}\right) \log\left(2j+\frac{1}{2}\right)  - \left(2j+\frac{1}{2}\right)  
		- \left(\frac{3}{2}j+\frac{1}{2}\right) \log \left(\frac{3}{2}j+\frac{1}{2}\right)\\ 
		&+ \left(\frac{3}{2}j+\frac{1}{2}\right) 
		- \left(\frac{j}{2}+\frac{1}{2}\right) \log \left(\frac{j}{2}+\frac{1}{2}\right) + \left(\frac{j}{2}+\frac{1}{2}\right)
		+ \frac{1}{2}\log \frac{1}{2}- \frac{1}{2}   \\
		\le &C''j,
	\end{align*}
	for some constant $C''>0$ and since $\sum_{j\ge 1} jg_j <\infty$.

	Similarly, for some constants $C,C'>0$,
	\begin{align*}
		\sum_{k\ge 1} \sum_{k/2\le j \le k}g_j \sum_{m=k-j}^{k+j-1} \frac{1}{m+1/2} 
		&\le C \sum_{k\ge 1} \sum_{k/2 \le j \le k}g_j \log \left( \frac{j+ k+ 1/2}{k-j+1/2}\right)\\
		&=C \sum_{j \ge 1} \sum_{j \le k \le 2j} g_j \log \left( \frac{j+ k+ 1/2}{k-j+1/2}\right)\\
		&\le C' \sum_{j\ge 1} g_j \int_{j}^{2j} \log \left( \frac{j+x+1/2}{x-j+1/2}\right) dx<\infty.
	\end{align*}
	This ends the proof of \eqref{eq sommable}.
	
	\emph{Step 4.} Next, we show that
	\begin{equation}\label{eq sommable2}
		\sum_{\ell \ge 1} \left\vert f_\ell - \frac{2(1- \sum_{j= 1}^{\ell/2} jg_j)}{\pi \ell}  \right\vert<\infty.
	\end{equation}
	Note that by \eqref{eq f k 2},
	\begin{align*}
		&\left\vert f_\ell - \frac{2(1- \sum_{j= 1}^{\ell/2} jg_j)}{\pi \ell}  \right\vert\le \frac{1}{\pi}\left\vert \frac{1}{\ell-1/2} + \frac{1}{\ell+1/2} - \frac{2}{\ell}\right\vert\\
		&{} \qquad \qquad \qquad+\frac{1}{\pi} \sum_{1 \le j \le \ell/2} g_j \left\vert \frac{2j}{\ell}-\sum_{m=\ell-j}^{\ell+j-1}\frac{1}{m+1/2}\right\vert
		+ \frac{1}{\pi} \sum_{j\ge  \ell/2} g_j \sum_{m=\ell-j}^{\ell+j-1} \frac{1}{m+1/2}.
	\end{align*}
	The first term is clearly summable. The last one is summable by \eqref{eq sommable}. Finally, for the second term, we upperbound
	\begin{align*}
		\sum_{\ell \ge 1}\sum_{1 \le j \le  \ell/2} g_j \left\vert \frac{2j}{\ell}-\sum_{m=\ell-j}^{\ell+j-1}\frac{1}{m+1/2}\right\vert 
		&=
		\sum_{\ell \ge 1}\sum_{1 \le j \le \ell/2} g_j \left\vert \sum_{m=\ell-j}^{\ell+j-1}\frac{m+1/2-\ell}{\ell(m+1/2)}\right\vert \\
		&\le C'' \sum_{\ell \ge 1} \sum_{1 \le j \le \ell/2} g_j \frac{j^2}{\ell^2},
	\end{align*}
	where $C''>0$ is a constant, and
	$$
	\sum_{\ell \ge 1} \sum_{1 \le j \le \ell/2} g_j \frac{j^2}{\ell^2} = \sum_{j\ge 1} g_j j^2\sum_{\ell\ge 2 j} \frac{1}{\ell^2} \le \sum_{j\ge 1} g_j j^2 \frac{1}{j} <\infty.
	$$
	This proves \eqref{eq sommable2}.
	
	\emph{Step 5.} Let us now deal with the case $\sum_{j \ge 1} j g_j \in [0,1)$ so as to prove the first point of Theorem \ref{th marche}. By \eqref{eq nu k}, for all $k\ge1$,
	\begin{align*}
		\nu(-k)= 
		&\frac{1}{\pi} \sum_{\ell \ge 1} \left(f_\ell - \frac{2(1-\sum_{j= 1}^{\ell/2} jg_j)}{\pi \ell} \right)
		\frac{4}{4(\ell+k-1)^2-1} \\
		&+ \sum_{\ell \ge 1} \frac{2(1- \sum_{j=1}^{\ell/2} jg_j)}{\pi^2} \frac{1}{\ell} \frac{4}{4(\ell+k-1)^2-1} .
	\end{align*}
	The first term is a $O(1/k^2)$ as $k \to \infty$ by \eqref{eq sommable2}. For the second term, for all $\ell_0\ge 1$,
	\begin{align*}
		\int_{\ell_0}^\infty \frac{1}{x} \frac{4}{4(x+k-1)^2-1} dx &\le \sum_{\ell \ge \ell_0} \frac{1}{\ell} \frac{4}{4(\ell+k-1)^2-1} \\
		&\le \frac{4}{4k^2-1} + \int_{\ell_0}^\infty \frac{1}{x} \frac{4}{4(x+k-1)^2-1} dx.
	\end{align*}
	Moreover, after the change of variable $y= x/k$, the integral can be bounded by
	\begin{align*}
		\frac{1}{k^2} \int_{\ell_0/k}^\infty \frac{1}{y} \frac{4}{4(y+1)^2} dy &\le \frac{1}{k^2}\int_{\ell_0/k}^\infty \frac{1}{y} \frac{4}{4(y+1-1/k)^2-1/k^2}  dy\\
		&\le \frac{1}{(1-1/k)^3}\frac{1}{k^2} \int_{\ell_0/k}^\infty \frac{1}{y} \frac{4}{4(y+1)^2} dy.
	\end{align*}
	Hence, since $\ell_0$ can be chosen arbitrarily large so that $\sum_{j=1}^{\ell_0/2} jg_j$ is close to $\sum_{j\ge 1} j g_j$,
	\begin{equation}\label{eq equivalent log k sur k carre}
		\nu(-k) \mathop{\sim}\limits_{k\to \infty} \frac{2(1- \sum_{j\ge 1} jg_j)}{\pi^2} \frac{\log k}{k^2}.
	\end{equation}
	
	\emph{Step 6.} We finally deal with the case $\sum_{j\ge 1} j g_j = 1$ to prove the second point of Theorem \ref{th marche}. We first assume that 
	$$
	\sum_{\ell \ge 1} \vert f_\ell \vert =\infty,
	$$
	which is equivalent by \eqref{eq sommable2} to the fact that 
	\begin{equation}\label{eq f ell pas sommable}
		\sum_{\ell \ge 1}  \frac{1- \sum_{j=1}^{\ell/2} j g_j }{\ell} = \infty.
	\end{equation}
	We define the function $L$ by setting
	$$
	\forall x\ge 0, \qquad  L(x) = \frac{1}{\pi} \sum_{\ell \ge 1} f_\ell \frac{4 x^2}{4(\ell+x-1)^2-1}.
	$$
	Let us show that $L$ is a slowly varying function in the sense that for all $\lambda>0$, the ratio $L(\lambda x)/L(x)$ converges to $1$ as $x\to \infty$. Let us also introduce the function $\widetilde{L} $ defined for all $x \ge 0$ by
	$$
	\widetilde{L}(x) = \frac{2}{\pi^2}  \sum_{\ell \ge 1} 
	\frac{1- \sum_{j=1}^{\ell/2} j g_j }{\ell} \frac{4 x^2}{4(\ell+x-1)^2-1}.
	$$
	By \eqref{eq sommable2} and since for all $x\ge 2, \ell\ge1$, we have $4x^2/(4(\ell+x-1)^2-1)\le 2$, 
	\begin{equation}\label{eq L L tilde}
		L(x) = \widetilde{L}(x) + O(1)
	\end{equation} 
	as $x \to \infty$.
	
	Let $\lambda >1 $. One upperbounds for all $x$ large enough,
	\begin{align*}
		&\big\vert\widetilde{L}(\lambda x) - \widetilde{L}(x) \big\vert \\
		&\le \frac{2}{\pi^2} \sum_{\ell \ge 1} \frac{1- \sum_{j=1}^{\ell/2} j g_j }{\ell} 
		\left\vert
		\frac{4\lambda^2x^2(4(\ell+x-1)^2-1) - 4x^2(4(\ell+\lambda x -1)^2-1)}{(4(\ell+\lambda x -1)^2-1)(4(\ell+x-1)^2-1)}
		\right\vert \\
		&= \frac{2}{\pi^2} \sum_{\ell \ge 1} \frac{1- \sum_{j=1}^{\ell/2} j g_j }{\ell} \\ 
		&\quad \times\left\vert \frac{(4\lambda x (\ell+x-1) - 4x(\ell + \lambda x-1))(4\lambda x(\ell+x-1) + 4x(\ell+\lambda x -1)) - 4 (\lambda^2-1)x^2}{(4(\ell+\lambda x -1)^2-1)(4(\ell+x-1)^2-1)}\right\vert\\
		&=\frac{2}{\pi^2}  \sum_{\ell \ge 1} \frac{1- \sum_{j=1}^{\ell/2} j g_j }{\ell} 
		\left\vert \frac{4(\lambda-1)(\ell-1)x(4\lambda x (\ell+x-1)+ 4 x(\ell+ \lambda x -1)) - 4(\lambda^2-1) x^2}{(4(\ell+\lambda x -1)^2-1)(4(\ell+x-1)^2-1)}\right\vert\\
		&\le C(\lambda)\sum_{\ell \ge 1} \frac{1}{\ell} \frac{x^2 \ell(\ell+x)}{(\ell + x)^4} \\
		&\le C(\lambda) \int_0^\infty \frac{x^2 }{(x+y)^3} dy\\
		&= C(\lambda) \int_0^\infty \frac{1}{(1+z)^3} dz,
	\end{align*}
	where $C(\lambda)$ is a positive constant depending on $\lambda$.
	In particular, 
	$$
	\widetilde{L}(\lambda x) - \widetilde{L}(x) = O(1)
	$$
	as $x\to \infty$. But by \eqref{eq f ell pas sommable}, we know that $\widetilde{L}(x) \to \infty$ as $x\to \infty$. As a consequence, $\widetilde{L}$ is slowly varying. By \eqref{eq L L tilde}, $L$ is therefore slowly varying and $L(x) \to \infty$ as $x \to \infty$. Besides, since $\sum_{j\ge 1} j g_j = 1$, one can see that $\widetilde{L}(x) = o(\log x)$ as $x \to \infty$ by a comparison between series and integrals, hence $L(x) = o(\log (x))$ as $x \to \infty$ by \eqref{eq L L tilde}. This ends the proof of the second point in the case where $\sum_{\ell\ge 1} \vert f_\ell \vert = \infty$.
	
	\emph{Step 7.}   In order to conclude, it remains to deal with the case where $\sum_{j\ge 1} j g_j = 1$ and
	\begin{equation}\label{eq f ell sommable}
		\sum_{\ell \ge 1} \vert f_\ell \vert <\infty 
	\end{equation}
	and to show that $\sum_{\ell\ge 1} f_\ell \neq 0$. Indeed, assuming \eqref{eq f ell sommable}, by dominated convergence and \eqref{eq nu k}, we get
	$$
	k^2 \nu(-k) \mathop{\longrightarrow}\limits_{k\to \infty} \frac{1}{\pi} \sum_{\ell \ge 1} f_\ell.
	$$
	By \eqref{eq sommable2}, \eqref{eq f ell sommable} and the fact that $\sum_{j\ge 1} j g_j = 1$, we have $\sum_{\ell\ge 1} (1/\ell)\sum_{j> \ell/2} jg_j <\infty$, hence by Fubini-Tonelli,
	\begin{equation}\label{eq g j j log j}
		\sum_{j\ge 1} g_j j \log j <\infty.
	\end{equation}
	Next, by \eqref{eq f k 2} and since $\sum_{j\ge 1} jg_j = 1$, for all $n\ge 1$,
	\begin{align*}
		\sum_{\ell=1}^n f_\ell &= \frac{1}{\pi} \sum_{j\ge 1} g_j \sum_{\ell=1}^n
		\left( \frac{j}{\ell-1/2} + \frac{j}{\ell+1/2} - \sum_{m=\ell-j}^{\ell+j-1} \frac{1}{m+1/2}\right),
	\end{align*}
	and for all $j\ge 1$,
	\begin{align*}
		\sum_{\ell=1}^n
		&\left( \frac{j}{\ell-1/2} + \frac{j}{\ell+1/2} - \sum_{m=\ell-j}^{\ell+j-1} \frac{1}{m+1/2}\right)\\
		&= \sum_{\ell=1}^n \sum_{m=0}^{j-1}\left(\frac{1}{\ell-1/2} + \frac{1}{\ell+1/2} - \frac{1}{\ell+m+1/2}-\frac{1}{\ell-m-1/2} \right) \\
		&= \sum_{m=0}^{j-1} \left(\sum_{\ell=1}^n\left(\frac{1}{\ell-1/2} + \frac{1}{\ell+1/2} \right)- 
		\sum_{\ell=1+m}^{n+m} \frac{1}{\ell+1/2} - \sum_{\ell=1-m}^{n-m} \frac{1}{\ell-1/2} 
		\right)\\
		&=\sum_{m=0}^{j-1}\left( \sum_{\ell=1}^m \frac{1}{\ell+1/2} - \sum_{\ell= n+1}^{n+m} \frac{1}{\ell+1/2} + \sum_{\ell=n-m+1}^n \frac{1}{\ell-1/2} - \sum_{\ell=1-m}^0 \frac{1}{\ell-1/2}\right)\\
		&= \sum_{m=0}^{j-1} \left(\sum_{\ell=1}^m \left(\frac{1}{\ell+1/2} + \frac{1}{\ell-1/2}\right)+ \sum_{\ell=1}^m \left(\frac{1}{n-m+\ell-1/2} - \frac{1}{n+\ell+1/2}\right)\right)\\
		&= \sum_{m=0}^{j-1} \sum_{\ell=1}^m \left(\frac{1}{\ell+1/2} + \frac{1}{\ell-1/2}\right)+ 	\sum_{m=0}^{j-1}\sum_{\ell=1}^m\frac{m+1}{(n+\ell+1/2)(n-m+\ell-1/2)}.
	\end{align*}
	Note that for all $n\ge1$, $m\ge0$ and $\ell\in \lb 1, m \rb$,
	$$
	\left\vert\frac{m+1}{(n+\ell+1/2)(n-m+\ell-1/2)}\right\vert\le \left\{\begin{matrix}
		2 /\vert{n-m+\ell-1/2}\vert & \text{ if }& n\ge (m+1)/2 \\
		4/(n+\ell+1/2)& \text{ if }& n\le m/2 \text{ and } \ell \le (m+1)/4\\
		4/\vert{n-m+\ell-1/2}\vert& \text{ if }& n\le m/2 \text{ and } \ell \ge (m+1)/4.
	\end{matrix}\right.
	$$
	As a result, there exists $C>0$ such that for all $j\ge 1$,
	$$
	\left\vert\sum_{m=0}^{j-1}\sum_{\ell=1}^m\frac{m+1}{(n+\ell+1/2)(n-m+\ell-1/2)}\right\vert\le
	C\sum_{m=1}^{j} \log m \le C(j+1)\log (j+1).
	$$
	Thus, in view of \eqref{eq g j j log j}, by dominated convergence,
	$$
	\sum_{\ell=1}^n f_\ell \mathop{\longrightarrow}\limits_{n\to \infty} \frac{1}{\pi} \sum_{j\ge 1} g_j \sum_{m=0}^{j-1} \sum_{\ell=0}^m \left(\frac{1}{\ell+1/2} +\frac{1}{\ell-1/2}\right),
	$$
	and the limit is non-zero since $\sum_{j\ge 1} jg_j = 1$ so that there exists $j\ge 1$ such that $g_j>0$ and we cannot have $g_1=1$ since $\nu \neq \delta_0$, hence there exists $j\ge 2$ such that $g_j>0$. This ends the proof.
\end{proof}

\section{Weight sequences}\label{section exemples}
In this section, we show how the computations in the proof of Theorem \ref{th marche} enable to characterise weight sequences of critical $O(2)$ loop-decorated maps and we give two examples.
\subsection{Characterisation of weight sequences of critical $O(2)$ decorated maps}
Note that in the proof of Theorem \ref{th marche} we computed $\nu(k)$ for all $k \in \Z$ in terms of the $g_j = \nu(j-1)-\nu(-j-1)$ for $j\ge 1$. Indeed, combining \eqref{eq f k 2} and \eqref{eq nu k}, one gets for all $k \in \Z$,
\begin{align}
	&\nu(k)  =  {\bf 1}_{k=0}\notag
	\\&+\frac{1}{\pi} \sum_{\ell \ge 1}\left( \frac{1}{\pi}\Big(\frac{1}{\ell-1/2}+ \frac{1}{\ell+1/2}  - \sum_{j\ge 1} g_j \sum_{m=\ell-j}^{\ell+j-1} \frac{1}{m+1/2} \Big)\right) \frac{4}{4(\ell-k-1)^2-1}.\label{eq nu 2}
\end{align}
In order to obtain examples of weight sequences of critical $O(2)$-decorated planar maps, an approach can be to choose a sequence $(g_k)_{k\ge 1}$ of non-negative real numbers such that $\sum_{j\ge 1} j g_j \le 1$ and to see whether $\nu$ defined by \eqref{eq nu 2} is indeed a probability measure on $\Z$ such that $h^\downarrow$ is $\nu$-harmonic and $\nu \neq \delta_0$. The following proposition actually states that it suffices to check that $\nu(k) \ge 0$ for all $k\in \Z$.

\begin{proposition}\label{proposition}
	Let $(g_j)_{j\ge 1}$ be a sequence of non-negative real numbers such that $\sum_{j\ge 1} jg_j \le 1$. Let $\nu$ be defined by \eqref{eq nu 2}. Assume that $\nu(0)<1$ and that for all $k\in \Z$, we have $\nu(k)\ge 0$. Then $\nu$ is a probability distribution and $h^\downarrow$ is $\nu$-harmonic on $\Z_{\ge 1}$. Moreover, for all $k\ge 1$,
	$$
	\nu(k-1) - \nu(-k-1)= g_k\ge 0.
	$$
	In particular, the sequence ${\bf q}= (q_k)_{k\ge 1}$ defined by setting for all $k\ge 1$,
	$$
	q_k = \nu(k-1) (\nu(-1)/2)^{k-1}
	$$
	is the weight sequence of the gasket of a critical $O(2)$ loop-decorated map. The associated triple $(\widetilde{\bf q}, 1/c_{\bf q}, 2)$ is given by $c_{\bf q} = 2/\nu(-1)$ and for all $k\ge 1$,
	$$
	\tilde{q}_k = g_k (\nu(-1)/2)^{k-1}.
	$$
\end{proposition}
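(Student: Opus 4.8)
The plan is to reverse the chain of computations carried out in the proof of Theorem~\ref{th marche}. Concretely, I would set $f_\ell$ by \eqref{eq f k 2} and define $\nu$ by \eqref{eq nu 2}, equivalently by \eqref{eq nu k} with $f(e^{i\theta})=\sum_{\ell\ge1}f_\ell e^{i\ell\theta}$. Since $\sum_j jg_j\le1$ is assumed, Steps~3 and~4 of the proof of Theorem~\ref{th marche} apply verbatim, and \eqref{eq sommable2} gives $f_\ell=\frac{2}{\pi\ell}\big(1-\sum_{j=1}^{\ell/2}jg_j\big)+r_\ell$ with $\sum_\ell\vert r_\ell\vert<\infty$; hence $\theta\mapsto f(e^{i\theta})\in\mathrm{L}^1([0,2\pi])$ (in fact $\mathrm{L}^2$), so the $\nu(k)$ are exactly the Fourier coefficients of $\psi(\theta):=1-2\vert\sin(\theta/2)\vert e^{-i\theta}f(e^{i\theta})$. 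To prove the relation $\nu(k-1)-\nu(-k-1)=g_k$, I would first note that \eqref{eq f k} identifies the $f_\ell$ as the Fourier coefficients of $\frac{1}{2\vert\sin(\theta/2)\vert}\big(e^{i\theta}-e^{-i\theta}-g(e^{i\theta})+g(e^{-i\theta})\big)$, which rearranges to $e^{i\theta}\psi(\theta)-e^{-i\theta}\psi(-\theta)=g(e^{i\theta})-g(e^{-i\theta})$. On the other hand, shifting indices in the Fourier series of $\psi$ gives $e^{i\theta}\psi(\theta)-e^{-i\theta}\psi(-\theta)=\sum_{m\in\Z}\big(\nu(m-1)-\nu(-m-1)\big)e^{im\theta}$. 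Matching Fourier coefficients of these two $\mathrm{L}^1$ functions yields $\nu(k-1)-\nu(-k-1)=g_k\ge0$, which is \eqref{eq marche}.

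Next I would check that $\nu$ is a probability measure. Nonnegativity is assumed, so only $\sum_k\nu(k)=1$ remains. From the estimate above, $f(e^{i\theta})=O(\log(1/\vert\theta\vert))$ as $\theta\to0$, so $2\vert\sin(\theta/2)\vert f(e^{i\theta})\to0$ and $\psi$ is continuous at $0$ with $\psi(0)=1$. Since the Abel means $\sum_k\nu(k)r^{\vert k\vert}$ increase to $\sum_k\nu(k)$ by monotone convergence and converge to $\psi(0)=1$ by the Poisson representation, we get $\sum_k\nu(k)=1$; in particular $\nu$ is summable, which retroactively legitimises the index shift used above. Finally $\nu\neq\delta_0$ because $\nu(0)<1$.

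The main obstacle is the $\nu$-harmonicity of $h^\downarrow$. Using $2\vert\sin(\theta/2)\vert=\sqrt{1-e^{i\theta}}\sqrt{1-e^{-i\theta}}$, the very definition of $\psi$ reads $1-\varphi(\theta)=\sqrt{1-e^{-i\theta}}\cdot\frac{\sqrt{1-e^{i\theta}}}{e^{i\theta}}f(e^{i\theta})$, where the first factor is a power series in $e^{-i\theta}$ with constant term $1$ and the second, because $f$ starts at order one, is a power series in $e^{i\theta}$. I would then compare this with the Wiener--Hopf factorisation of Proposition~\ref{prop wiener hopf} and appeal to its uniqueness to identify $1-G^<(e^{-i\theta})=\sqrt{1-e^{-i\theta}}$, that is $G^<(z)=1-\sqrt{1-z}$. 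As $\sum_{\ell\ge0}h^\downarrow(\ell)z^\ell=(1-z)^{-1/2}=1/(1-G^<(z))$, the function $h^\downarrow$ is then the strict descending pre-renewal function of the walk $S$, and such a pre-renewal function is $\nu$-harmonic on $\Z_{\ge1}$ by the fluctuation theory behind \cite{Don99}. The delicate point is that the constructed walk has mean $\sum_j jg_j-1\le0$ and therefore may drift to $-\infty$ when the inequality is strict, making the ascending ladder structure defective; one must consequently invoke the general Wiener--Hopf factorisation and its uniqueness, and control the requisite integrability ($\theta\mapsto f(e^{i\theta})\in\mathrm{L}^2$), rather than the oscillating version of Proposition~\ref{prop wiener hopf}.

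It remains to transfer these facts to weight sequences. Since $\nu(-1)=2/c_{\bf q}>0$ for the finite constant attached to an admissible sequence, the correspondence \eqref{eq lien q nu} — which, by Lemma~5.2 and Proposition~5.3 of \cite{StFlour}, matches probability measures for which $h^\downarrow$ is $\nu$-harmonic with admissible weight sequences — associates to $\nu$ the admissible sequence $q_k=\nu(k-1)(\nu(-1)/2)^{k-1}$, with $c_{\bf q}=2/\nu(-1)$ and $W^{(k)}=\tfrac12\nu(-k-1)c_{\bf q}^{k+1}$. The identity $\nu(k-1)-\nu(-k-1)=g_k\ge0$ is exactly \eqref{eq gasket} with $n=2$, so $\bf q$ is the weight sequence of the gasket of a critical $O(2)$ loop-decorated map; substituting $c_{\bf q}$ and $W^{(k)}$ into \eqref{eq lien q qtilde} finally gives $\tilde q_k=q_k-2c_{\bf q}^{-2k}W^{(k)}=g_k(\nu(-1)/2)^{k-1}$, as claimed.
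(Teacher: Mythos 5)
Your derivation of the identity $\nu(k-1)-\nu(-k-1)=g_k$ follows the paper's proof (reverse the computations leading to \eqref{eq f k 2} and \eqref{eq nu k}, using that \eqref{eq sommable2} puts $\theta\mapsto f(e^{i\theta})$ in $\mathrm{L}^2$), and your Abel--Poisson argument for $\sum_k \nu(k)=1$ is a legitimate alternative to the paper's direct computation, which instead applies Fubini--Tonelli to the decomposition of \eqref{eq nu 2} and the telescoping identity $\sum_{k\in\Z}\frac{4}{4(\ell-k-1)^2-1}=0$. The genuine gap is in the harmonicity step. You factor $1-\varphi(\theta)=\sqrt{1-e^{-i\theta}}\cdot e^{-i\theta}\sqrt{1-e^{i\theta}}\,f(e^{i\theta})$ and ``appeal to uniqueness'' of the Wiener--Hopf factorisation to conclude $G^<(z)=1-\sqrt{1-z}$. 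But a uniqueness theorem such as Theorem 3 of \cite{DSTW24} identifies the probabilistic factors only once the exhibited factorisation is known to lie in the admissible class --- each factor of the form $1-G$ with $G$ the generating function of a (sub-)probability measure on the non-negative integers, or at least a suitably non-vanishing analytic function on the disc. For your candidate ascending factor $1-G^{\ge,\mathrm{cand}}(z)=\sqrt{1-z}\,f(z)/z$ nothing of the sort is known a priori: the coefficients $f_\ell$ given by \eqref{eq f k 2} need not be non-negative (take $g$ concentrated at one large index $j$ with $g_j=1/j$; then $f_{j-1}\approx \frac{2-\log j}{\pi(j-1)}<0$), the coefficients of $1-\sqrt{1-z}f(z)/z$ have unknown signs, and $f$ could in principle vanish inside the disc. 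In fact, that $\sqrt{1-z}f(z)/z$ equals $1-G^\ge(z)$ for the walk is essentially equivalent to the conclusion you are trying to establish, so invoking uniqueness here is circular as stated. Your concern about drift is, incidentally, misplaced in a different way: the mean of $\nu$ is not $\sum_j jg_j-1$ but undefined (both tails of $\nu$ are of order $k^{-2}$ up to slowly varying corrections, so $\E\vert S_1\vert=\infty$), and a posteriori both ladder heights are proper, so the walk oscillates; but this does not repair the uniqueness step.

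The paper circumvents exactly this obstruction by a superposition argument. It introduces the explicit symmetric law $\nu_{\mathrm{sym}}(k)={\bf 1}_{k=0}+\frac{2}{\pi}\frac{1}{4k^2-1}$, computes its characteristic function $1-\vert\sin(\theta/2)\vert$ as in \eqref{fonction caracteristique sym}, and factorises $1-\varphi_{\mathrm{sym}}(\theta)=\frac12\sqrt{1-e^{-i\theta}}\sqrt{1-e^{i\theta}}$; here \emph{both} factors are explicit proper ladder-height generating functions ($1-\sqrt{1-z}$ and $1-\frac12\sqrt{1-z}$), so the uniqueness result of \cite{DSTW24} applies and, via \cite{Don99}, $h^\downarrow$ is $\nu_{\mathrm{sym}}$-harmonic, i.e.\@ $\sum_k h^\downarrow(p+k)\frac{4}{4k^2-1}=0$. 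Since \eqref{eq nu 2} exhibits $\nu-\delta_0$ as a superposition, with coefficients $2f_\ell$ of arbitrary sign, of shifted copies of $\nu_{\mathrm{sym}}-\delta_0$, the kernel identity \eqref{eq h fleche vers le bas harmonique} and Fubini--Tonelli give $\nu$-harmonicity of $h^\downarrow$ by pure linearity, with no positivity or non-vanishing requirement on $f$. To complete your proof you would have to either verify the hypotheses of the uniqueness theorem for $\sqrt{1-z}f(z)/z$ --- which the available estimates do not provide --- or adopt this reduction to $\nu_{\mathrm{sym}}$. The final transfer to weight sequences via \eqref{eq lien q nu}, \eqref{eq gasket} and \eqref{eq lien q qtilde} is fine and matches the paper.
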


\begin{proof}
	As in the previous section, let $f_\ell$ be the expression in the big parenthesis in \eqref{eq nu 2}. Note that the steps 3 and 4 of the proof of Theorem \ref{th marche} only use the fact that $g_j\ge 0$ for all $j\ge 1$ and that $\sum_{j\ge 1} j g_j <\infty$ so that \eqref{eq sommable2} holds. As a result, after writing
	\begin{align*}
		\nu(k)= &{\bf 1}_{k=0} + \frac{1}{\pi} \sum_{\ell \ge 1} \left(f_\ell - \frac{2(1-\sum_{j= 1}^{\ell/2} jg_j)}{\pi \ell} \right)
		\frac{4}{4(\ell-k-1)^2-1}\\ &+ \sum_{\ell \ge 1} \frac{2(1- \sum_{j=1}^{\ell/2} jg_j)}{\pi^2} \frac{1}{\ell} \frac{4}{4(\ell-k-1)^2-1} ,
	\end{align*}
	one can apply Fubini-Tonelli theorem to obtain that
	\begin{align*}
		\sum_{k\in \Z} \nu(k) = 1 +  &\frac{1}{\pi} \sum_{\ell \ge 1} \left(f_\ell - \frac{2(1-\sum_{j= 1}^{\ell/2} jg_j)}{\pi \ell} \right)
		\sum_{k \in \Z}\frac{4}{4(\ell-k-1)^2-1} \\
		&+ \sum_{\ell \ge 1} \frac{2(1- \sum_{j=1}^{\ell/2} jg_j)}{\pi^2} \frac{1}{\ell} \sum_{k \in \Z}\frac{4}{4(\ell-k-1)^2-1}.
	\end{align*}
	Moreover, after identifying a telescopic sum,
	$$
	\sum_{k \in \Z}\frac{4}{4(\ell-k-1)^2-1} = 2\sum_{k\ge 1} \frac{4}{4k^2-1} - 4= 2\times 2 -4  = 0.
	$$
	This proves that $\nu$ is a probability measure. 
	
	Next, let us check that the function $h^\downarrow$ defined in \eqref{eq h fleche vers le bas} is $\nu$-harmonic on $\Z_{\ge 1}$. Let $p\ge 1 $. Since $h^\downarrow$ is non-negative and is bounded, we may apply again Fubini-Tonelli and obtain that 
	\begin{align*}
		\sum_{k\in \Z} h^\downarrow(p+k)\nu(k) =&h^\downarrow(p)\\
		&+  \frac{1}{\pi} \sum_{\ell \ge 1} \left(f_\ell - \frac{2(1-\sum_{j= 1}^{\ell/2} jg_j)}{\pi \ell} \right)
		\sum_{k \in \Z}h^\downarrow(p+k)\frac{4}{4(\ell-k-1)^2-1} \\
		&+ \sum_{\ell \ge 1} \frac{2(1- \sum_{j=1}^{\ell/2} jg_j)}{\pi^2} \frac{1}{\ell} \sum_{k \in \Z} h^\downarrow(p+k)\frac{4}{4(\ell-k-1)^2-1}.
	\end{align*}
	So, in order to prove that $h^\downarrow $ is $\nu$-harmonic, it suffices to check that for all $p\ge 1 $,
	\begin{equation}\label{eq h fleche vers le bas harmonique}
		\sum_{k \in \Z} h^\downarrow(p+k)\frac{4}{4(\ell-k-1)^2-1} = 0.
	\end{equation}
	Let $\nu_{\mathrm{sym}}$ be the probability distribution defined by
	$$
	\forall k \in \Z, \qquad \nu_{\mathrm{sym}} (k)  = {\bf 1}_{k=0} + \frac{2}{\pi} \frac{1}{4k^2-1}.
	$$
	One can check that it is indeed a probability distribution using a telescopic sum. This probability distribution appears in a particular weight sequence of $O(2)$ loop-decorated maps introduced in Remark 1 of \cite{B18}. Its characteristic function is given for all $\theta \in \R$ by
	\begin{equation}\label{fonction caracteristique sym}
		\varphi_{\mathrm{sym}}(\theta) = 1 - \vert \sin (\theta/2)\vert.
	\end{equation}
	Indeed, one readily computes for all $k \in \Z$,
	\begin{align*}
		\frac{1}{2\pi}\int_0^{2\pi} \sin(\theta/2) e^{-ik\theta} d \theta
		&= \frac{1}{4\pi}\int_0^{2\pi} (\sin((k+1/2)\theta)-\sin((k-1/2)\theta)) d\theta \\
		&= \frac{1}{4\pi}\left(\frac{2}{k+1/2} - \frac{2}{k-1/2}\right).
	\end{align*}
	We deduce from \eqref{fonction caracteristique sym} the Wiener--Hopf factorisation $1-\varphi_{\mathrm{sym}}(\theta) =(1/2) \sqrt{1-e^{-i\theta}} \sqrt{1-e^{i\theta}}$, with the generating function of the strict descending ladder height $G_{\mathrm{sym}}^<(z) = 1- \sqrt{1-z}$ (the uniqueness of the Wiener--Hopf factorisation was recently proven in Theorem 3 of \cite{DSTW24}). Hence, by doing the same computations as in the beginning of the proof of Theorem \ref{th marche}, the function $h^\downarrow$ is $\nu_{\mathrm{sym}}$-harmonic on $\Z_{\ge 1}$. In other words, for all $p\ge 1 $,
	$$
	\sum_{k \in \Z} h^\downarrow(p+k) \frac{4}{4k^2-1} = 0,
	$$
	hence \eqref{eq h fleche vers le bas harmonique}. 
	
	Then, let us prove that $\nu(k-1)-\nu(-k-1)=g_k$ for all $k\ge 1$. Let $g(z)\coloneqq \sum_{k\ge 1} g_k z^k$. By \eqref{eq sommable2}, we deduce that the power series $f(z)\coloneqq \sum_{k\ge 1} f_k z^k$ has radius one and that $\theta \mapsto f(e^{i\theta}) \in \mathrm{L}^2([0,2\pi])$. By doing the computations leading to \eqref{eq f k 2} and \eqref{eq f k} in reverse order, one gets that for all $k\ge 1$,
	$$
	f_k = \frac{1}{2\pi} \int_0^{2\pi}  	\frac{1}{\sqrt{1-e^{i\theta}}\sqrt{1-e^{-i\theta}}} \left( - g(e^{i\theta}) + g(e^{-i\theta}) +e^{i\theta} - e^{-i\theta} \right) e^{-i k \theta}  d\theta.
	$$
	Moreover, for all $\theta \in \R$,
	$$
	\vert g(e^{i\theta}) - g(e^{-i\theta}) \vert =  \left\vert \sum_{k\ge 1} g_k 2i \sin(k\theta)\right\vert \le 2 \vert \theta \vert \sum_{k\ge 1} k g_k,
	$$
	so that the function $ \theta \mapsto ({1}/({\sqrt{1-e^{i\theta}}\sqrt{1-e^{-i\theta}}})) ( - g(e^{i\theta}) + g(e^{-i\theta}) +e^{i\theta} - e^{-i\theta} )$ is bounded. 
	
	As a consequence, we obtain the equality for Lebesgue almost all $\theta \in [0,2\pi]$,
	$$
	\frac{1}{\sqrt{1-e^{i\theta}}\sqrt{1-e^{-i\theta}}} \left( - g(e^{i\theta}) + g(e^{-i\theta}) +e^{i\theta} - e^{-i\theta} \right) = f(e^{i\theta}) - f(e^{-i\theta}).
	$$
	Besides, let $\varphi$ be the characteristic function of $\nu$. Since $\theta \mapsto f(e^{i\theta}) \in \mathrm{L}^2([0,2\pi])$, we can also do the computation leading to \eqref{eq nu k} in reverse order and get \eqref{eq lien phi f} for Lebesgue almost every $\theta\in [0,2\pi]$. Combining \eqref{eq lien phi f} with the above equality yields
	$$
	e^{i\theta} \varphi(\theta) - e^{-i\theta}\varphi(-\theta)  = g(e^{i\theta}) - g(e^{-i\theta})
	$$
	for Lebesgue almost every $\theta \in [0,2\pi]$. This proves that $\nu(k-1)-\nu(-k-1)=g_k$ for all $k\ge 1$. 
	The next point of the proposition comes from exactly the same ideas as in the proof of Theorem \ref{th carte o2}. The last points come from the gasket decomposition.
\end{proof}
\subsection{Examples}
Let us give two examples of models of critical $O(2)$ loop-decorated planar maps. We start with an example from Timothy Budd and then give a new example which only has faces of degree $4$ which are traversed with loops.
\subsubsection{The symmetric example of Budd} A key ingredient of the proof of Proposition \ref{proposition} is the symmetric probability distribution $\nu_{\mathrm{sym}}$ given by
$$
\forall k \in \Z, \qquad \nu_{\mathrm{sym}}(k) = {\bf 1}_{k=0} +  \frac{2}{\pi} \frac{1}{4k^2 - 1}.
$$
Here the slowly varying function of Theorem \ref{th marche} is a constant. The associated weight sequence $\bf q$ for the gasket is given by
$$
\forall k\ge 1, \qquad q_k = \left({\bf 1}_{k=1} +  \frac{2}{\pi} \frac{1}{4(k-1)^2 - 1}\right) (3\pi)^{-k+1}
$$
and $c_{\bf q} = 3\pi$, while the associated triple $(\widetilde{\bf q}, 1/c_{\bf q},2)$ is given by Remark 1 of \cite{B18}: using \eqref{eq lien q qtilde} and \eqref{eq lien q nu}, one computes for all $k\ge 1$,
$$
\tilde{q}_k = c_{\bf q}^{-k+1} (\nu(k-1)-\nu(-k-1))= (3\pi)^{1-k} \frac{2 k}{\pi(k - 3/2) (k - 1/2) (k + 1/2) (k + 3/2)} + {\bf 1}_{k=1}.
$$
From the above expression one can see that there are faces of arbitrary (even) degrees which are not traversed by loops. The partition function is given for all $\ell \ge 1$ by
$$
W^{(\ell)} = \frac{c_{\bf q}^{\ell+1}}{2} \nu(-\ell-1) = \frac{(3\pi)^{\ell+1}}{\pi} \frac{1}{4(\ell+1)^2-1}.
$$

\subsubsection{Fully packed critical $O(2)$-loop decorated quadrangulations}\label{sous-section fully packed}

\begin{figure}[h]
	\centering
	\includegraphics[width=0.43\textwidth]{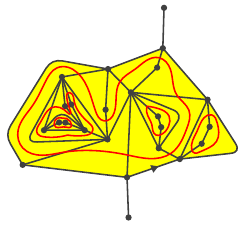}
	\hspace{0.8cm}
	\includegraphics[width=0.39\textwidth]{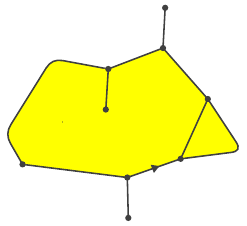}
	\caption{Left: a fully packed loop-decorated quadrangulation. Right: its gasket.}
	\label{fig fullypacked}
\end{figure}
A new simple example of weight sequence is obtained by taking $g_j=0$ for all $j\ge 1$. By \eqref{eq nu 2}, for all $k\in \Z$,
$$\nu(k) = {\bf 1}_{k=0} + \frac{1}{\pi^2} \sum_{\ell \ge 1} \left(\frac{1}{\ell-1/2} + \frac{1}{\ell+1/2}\right) \frac{4}{4(\ell-k-1)^2 -1}.$$
It clearly satisfies $\nu(k) \ge 0$ for all $k \le -1$ and one computes for all $k\ge 1$,
\begin{align*}
	&\nu(k-1)-\nu(-k-1) \\
	&= {\bf 1}_{k=1}+ \frac{1}{\pi^2}
	\sum_{\ell \ge 1} \left(\frac{1}{\ell-1/2} + \frac{1}{\ell+1/2}\right)\left( \frac{4}{4(\ell-k)^2 -1}
	-\frac{4}{4(\ell+k)^2 -1}\right)=0.
\end{align*}
In particular, $\nu(k) \ge 0$ for all $k \in \Z$. Moreover, one computes $\nu(-1) = 4/\pi^2$. Thus, by Proposition \ref{proposition}, the weight sequence ${\bf q}= (q_k)_{k\ge 1}$ defined by
$$
\forall k\ge 1, \qquad q_k  = \left({\bf 1}_{k=1} + \frac{1}{\pi^2} \sum_{\ell \ge 1} \left(\frac{1}{\ell-1/2} + \frac{1}{\ell+1/2}\right) \frac{4}{4(\ell-k)^2 -1}\right) (\pi^2/2)^{1-k}
$$
is the weight sequence of the gasket of a critical $O(2)$-decorated planar map and $c_{\bf q} = 2/\nu(-1) = \pi^2/2$, while the associated triple is $(\widetilde{\bf q}, 2/\pi^2, 2)$, where $\widetilde{\bf q}$ is the constant sequence equal to zero. Here, the $O(2)$-decorated map only has quadrangles and every quadrangle is traversed by a loop. See Figure \ref{fig fullypacked}. The partition function is given, for all $\ell\ge 1$, by
$$
W^{(\ell)} = \frac{c_{\bf q}^{\ell+1}}{2} \nu(-\ell-1) = \frac{(\pi^2/2)^{\ell+1}}{2\pi^2}  \sum_{k\ge 1} \left(\frac{1}{k-1/2} + \frac{1}{k+1/2} \right) \frac{4}{4(k+\ell)^2 -1}.
$$
In this example, the slowly varying function $L_{\bf q}(\ell)$ is equivalent to $(2/\pi^2)\log (\ell)$ as $\ell \to \infty$.

\paragraph{Acknowledgements.} Many thanks to Cyril Marzouk and Nicolas Curien for their support. I thank Elric Angot and Orphée Collin for stimulating discussions at an early stage of this work. I thank \'Elie A\"id\'ekon, William Da Silva and XingJian Hu for sharing with me their new version of \cite{AdSH24}. I also acknowledge the support of the ERC consolidator grant 101087572 ``SuPerGRandMa''. I am grateful to the anonymous referee for their careful reading.
\bibliographystyle{alpha}
\bibliography{cartesO2}

\end{document}